\newtheorem{theorem}{Theorem}[section]
\newtheorem{lemma}[theorem]{Lemma}
\newtheorem{corollary}[theorem]{Corollary}
\theoremstyle{definition}
\newtheorem{definition}[theorem]{Definition}
\numberwithin{equation}{section}
\let\d=\delta
\let\ep=\epsilon
\let\va=\varphi
\let\fy=\infty
\def\ejx{E_{j,x}}
\def\ejx1{E_{j,x}^1}
\def\ejx2{E_{j,x}^2}
\newcommand{\be}{\begin{equation*}}
\newcommand{\ee}{\end{equation*}}
\newcommand{\ben}{\begin{equation}}
\newcommand{\een}{\end{equation}}
\newcommand{\bn}{\begin{enumerate}}
\newcommand{\en}{\end{enumerate}}
\newcommand{\bs}{\backslash}
\newcommand{\ba}{\begin{align}}
\newcommand{\ea}{\end{align}}
\def\rr{{\mathbb R}}
\def\rn{{{\rr}^n}}
\begin{document}
\title[The unboundedness of Hausdorff operators on Quasi-Banach spaces]
{The unboundedness of Hausdorff operators on Quasi-Banach spaces}

\author{WEICHAO GUO}
\address{School of Science, Jimei University, Xiamen, 361021, P.R.China}
\email{weichaoguomath@gmail.com}
\author{JIANMIAO RUAN}
\address{Department of Mathematics, Zhejiang International Studies University,
Hangzhou 310012, P. R. China}
\email{rjmath@163.com}

\author{GUOPING ZHAO}
\address{School of Applied Mathematics, Xiamen University of Technology, Xiamen, 361024, P.R.China}
\email{guopingzhaomath@gmail.com}
\thanks{}

\subjclass[2010]{Primary 42B35; Secondary 47G10.}

\keywords{Hausdorff operator, local Hardy space, modulation space, unboundedness.}

\begin{abstract}
  In this note, we show that the Hausdorff operator $H_{\Phi}$
  is unbounded on a large family of Quasi-Banach spaces, unless $H_{\Phi}$ is a zero operator.
\end{abstract} \maketitle

\section{Introduction and Motivation}
Hausdorff operator, in connection with some classical summation methods, has been studied for a long time in the field of real and complex analysis. For the historical background and recent developments of the boundedness on function spaces regarding Hausdorff operators,
we refer the reader to survey papers by Chen-Fan-Wang \cite{ChenFanWang2013AMJCUSB} and Liflyand \cite{Liflyand2011}.

For a suitable function $\Phi$, the corresponding Hausdorff operator $H_{\Phi}$ can be defined by
\begin{equation}
  H_{\Phi}f(x):=\int_{\mathbb{R}^n}\Phi(y)f\left(\frac{x}{|y|}\right)dy.
\end{equation}

Particularly, when $\Phi$ is taken suitably, Hausdorff operator contains some important operators in the field of harmonic analysis.
For instance, the Hardy operator, adjoint Hardy operator (see \cite{ChenFanLi2012CAMSB, ChenFanZhang2012AMSES, FanLin2014AB}),
and the Ces\`{a}ro operator \cite{Miyachi2004JFAA, Siskakis1990PAMS} in one dimension.
The Hardy-Littlewood-P\'{o}lya operator and the Riemann-Liouville fractional integral operator can also be derived from the Hausdorff operator.

In recent years, there is an increasing interest on
the boundedness of Hausdorff operators on function spaces,
one can see \cite{GaoWuGuo2015MIA,GaoZhong2014MIA} for the
the boundedness on Lebesgue spaces,
\cite{ZhaoFanGuo2018AFA} for the boundedness on modulation and Wiener amalgam spaces,
and \cite{FanLin2014AB, LiflyandMoricz2000PAMS, RuanFan2016JMAA} for the boundedness on $H^1$ and $h^1$.
However, since the argument of using Minkowski's inequality cannot be applied to Quasi-Banach spaces,
there are only a few boundedness results considering the boundedness of Hausdorff operators on Quasi-Banach spaces.

Among the previous results, the study of Hausdorff operators on $H^p(\rn)\ (0<p<1)$ has its special status, since
$H^p(\rn)$ is a Quasi-Banach space and also an important function space in the field of harmonic analysis.
The study of $H_{\Phi}f$ on $H^p(\rr)$ was first initiated by Kanjin \cite{Kanjin2001SM} and continued in
\cite{LiflyandMiyachi2009SM}.
Very recently, Liflyand-Miyachi \cite{LiflyandMiyachi2019TAMS} establishes the multidimensional boundedness results on $H^p(\rn)$.
For the $H^p(\rr)$ boundedness of another type of Hausdorff operator,
we refer the reader to \cite{ChenFanLinRuan2016AM, RuanFan2017MN}.

Based on the previous results,
an interesting problem is whether we can establish the boundedness result on local Hardy space $h^p(\rn)$,
or even on the general inhomogeneous frequency decomposition spaces such as  Triebel-Lizorkin spaces $F^{p,q}_s(\rn)$,
Besov spaces $B^{p,q}_s(\rn)$ and modulation spaces $M^{p,q}_s(\rn)$ with $p\in (0,1)$. In this paper, we will explore this problem and give an negative answer,
showing that the Hausdorff operator is unboundedness on a large family of
Quasi-Banach spaces.
In particular, we prove that all the nonzero Hausdorff operators are unbounded on
$F_s^{p,q}$, $B_s^{p,q}$ or $M^{p,q}_s$ with $p\in (0,1)$.

Our paper is organized as follows.
In Section 2, we collect some notations and basic properties of function spaces.
The unboundedness results and their proofs will be presented in Section 3.

Throughout this paper, we will adopt the following notations.
We use $X\lesssim Y$ to denote the statement that $X\leq CY$, with a positive constant $C$ that may depend on $n, \,p$,
but it might be different from line to line.
The notation $X\sim Y$ means the statement $X\lesssim Y\lesssim X$.
We use $X\lesssim_{\lambda}Y$ to denote $X\leq C_{\lambda}Y$,
meaning that the implied constant $C_{\lambda}$ depends on the parameter $\lambda$.
For $x=(x_{1},x_{2},...,x_{n})\in \rn$,
we denote $|x|_{\infty }:=\max\limits_{i=1,2,...,n}|x_{i}|$ and $\langle x\rangle:=(1+|x|^{2})^{{1}/{2}}.$

\section{Preliminary}
Let $\mathscr{S}:= \mathscr{S}(\mathbb{R}^{n})$ be the Schwartz space
and $\mathscr{S}':=\mathscr{S}'(\mathbb{R}^{n})$ be the space of tempered distributions.
We define the Fourier transform $\mathscr {F}f$ and the inverse Fourier transform $\mathscr {F}^{-1}f$ of $f\in \mathscr{S}(\mathbb{R}^{n})$ by
$$
\mathscr {F}f(\xi)=\hat{f}(\xi)=\int_{\mathbb{R}^{n}}f(x)e^{-2\pi ix\cdot \xi}dx
,
~~
\mathscr {F}^{-1}f(x)=\check{f}(x)=\int_{\mathbb{R}^{n}}f(\xi)e^{2\pi ix\cdot \xi}d\xi.
$$

The local Hardy space was introduced by Goldberg \cite{Goldberg1979DMJ}.
Let $0<p<\infty$ and let $\psi\in\mathscr{S}$ satisfy $\int_{\mathbb{R}^n}\psi(x)dx\neq0$.
Define $\psi_t(x): =t^{-n}\psi(x/t)$.
The \emph{local Hardy space} is defined by
\begin{equation*}
  h^p:=\{f\in\mathscr{S}': \|f\|_{h^p}=\|\sup_{0<t<1}|\psi_t\ast f|\|_{L^p}<\infty\}.
\end{equation*}
We note that the definition of the local Hardy spaces is independent of the choice of $\psi\in \mathscr{S}$.

To define Besov space and Triebel-Lizorkin space, we introduce  the dyadic decomposition of $\mathbb{R}^n$.
Let $\varphi$ be a smooth bump function supported in the ball $\{\xi: |\xi|<3/2\}$ and be equal to 1 on the ball $\{\xi: |\xi|\leq 4/3\}$.
Denote
\begin{equation}\label{decompositon function of Besov space}
\phi(\xi)=\varphi(\xi)-\varphi(2\xi),
\end{equation}
and a function sequence
\begin{equation}
\begin{cases}
\phi_j(\xi)=\phi(2^{-j}\xi),~j\in \mathbb{Z}^{+},
\\
\phi_0(\xi)=1-\sum\limits_{j\in \mathbb{Z}^+}\phi_j(\xi)=\varphi(\xi).
\end{cases}
\end{equation}
For integers $j\geq 0$, we define the Littlewood-Paley operators
\begin{equation}
\Delta_j=\mathscr{F}^{-1}\phi_j\mathscr{F}.
\end{equation}
Let $0<p,q\leq \infty$, $s\in \mathbb{R}$. For a tempered distribution  $\ f$,  we set the norm
\begin{equation}
\|f\|_{B^{p,q}_s}=\left(\sum_{j=0}^{\infty}2^{jsq}\|\Delta_jf\|_{L^p}^q \right)^{1/q},~
\end{equation}
with the usual modifications when $q=\infty$.
The (inhomogeneous) Besov  space ${B^{p,q}_s}$ is the space of all tempered distributions $f$ for which the quantity $\|f\|_{B^{p,q}_s}$ is finite.
Let $0<p<\infty$, $0<q\leq \infty$, $s\in \mathbb{R}$. For a tempered distribution  $\ f$,  we set the norm
\begin{equation}
\|f\|_{F^{p,q}_s}=\left\|\left(\sum_{j=0}^{\infty}2^{jsq}|\Delta_jf|^q \right)^{1/q}\right\|_{L^p}
\end{equation}
with the usual modifications when $q=\infty$.
The Triebel-Lizorkin space ${F^{p,q}_s}$ is the space of all tempered distributions $f$ for which the quantity $\|f\|_{F^{p,q}_s}$ is finite.
We recall that the local Hardy space $h^p$ is equivalent with the inhomogeneous Triebel-Lizorkin space $F^{p,2}_0$ for $p\in(0,\infty)$

Next, we introduce the modulation space.
Denote by $Q_{k}$ the unit cube with the center at $k$. Then the family $\{Q_{k}\}_{k\in\mathbb{Z}^{n}}$
constitutes a decomposition of $\mathbb{R}^{n}$.
Let
$\eta: \mathbb{R}^{n} \rightarrow [0,1]$ be a smooth function satisfying that $\eta(\xi)=1$ for
$|\xi|_{\infty}\leq {1}/{2}$ and $\eta(\xi)=0$ for $|\xi|_{\infty}\geq 3/4$. Let
\begin{equation*}
\eta_{k}(\xi)=\eta(\xi-k),  k\in \mathbb{Z}^{n}
\end{equation*}
be a translation of \ $\eta$.
Since $\eta_{k}(\xi)=1$ in $Q_{k}$, we have that $\sum_{k\in\mathbb{Z}^{n}}\eta_{k}(\xi)\geq 1$
for all $\xi\in\mathbb{R}^{n}$. Denote
\begin{equation*}
\sigma_{k}(\xi)=\eta_{k}(\xi)\left(\sum_{l\in\mathbb{Z}^{n}}\eta_{l}(\xi)\right)^{-1},  ~~~~ k\in\mathbb{Z}^{n}.
\end{equation*}
It is easy to know that $\{\sigma_{k}\}_{k\in\mathbb{Z}^{n}}$
constitutes a smooth partition of the unity, and $%
\sigma_{k}(\xi)=\sigma(\xi-k)$. The frequency-uniform decomposition
operators can be defined by
\begin{equation*}
\Box_{k}:= \mathscr{F}^{-1}\sigma_{k}\mathscr{F}
\end{equation*}
for $k\in \mathbb{Z}^{n}$.
Now, we give the (discrete) definition of modulation space $M^{p,q}_s$.

\begin{definition}
Let $s\in \mathbb{R}, 0<p,q\leq \infty$. The modulation space $M^{p,q}_s$ consists of all $f\in \mathscr{S}'$ such that the (quasi-)norm
\begin{equation*}
\|f\|_{M^{p,q}_s}:=\left( \sum_{k\in \mathbb{Z}^{n}}\langle k\rangle^{sq}\|\Box_k f\|_{L^p}^{q}\right)^{1/q}
\end{equation*}
is finite.
Note that this definition is independent of the choice of $\{\sigma_k\}_{k\in\mathbb{Z}^n}$.
We also recall a basic fact that $C_c^{\fy}(\rn)\subset \mathscr{S}(\rn)\subset M^{p,q}_s$
for any $s\in \mathbb{R}, 0<p,q\leq \infty$.
\end{definition}

We say $X\hookrightarrow L^p_{0}(\rn)$, if for every $\va\in C_c^{\fy}(\rn)$ we have
\be
\|\va \ast f\|_{L^p(\rn)}\leq C\|f\|_X
\ee
for all $f\in X$, where the constant $C$ is only dependent on $\va$.

Following, we collect some basic embedding results of function spaces.
\begin{lemma}\label{lemma, ebd}
Let $0<p\leq 1, 0<q\leq \fy$, $s\in \mathbb{R}$.
\bn
\item For $X=h^p, M^{p,p}, B^{p,p}, F^{p,p}$,
we have $\|g\|_{L^p}\lesssim \|g\|_X$ for all measurable functions $g\in X$.
\item For $Y=M^{p,q}_s, \mathscr{F}M^{q,p}, B^{p,q}_s, F^{p,q}_s$, we have $Y\hookrightarrow L^p_{0}(\rn)$.
\en
\end{lemma}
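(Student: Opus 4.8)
The plan is to treat the two statements by different mechanisms: part (1) rests only on the $p$-triangle inequality for $L^p$ with $0<p\le 1$ (together with an approximate-identity argument for $h^p$), while part (2) exploits the rapid decay of $\widehat{\varphi}$ for $\varphi\in C_c^\infty(\rn)$, which is what makes the embedding survive for arbitrary $s\in\bbR$ and $q$.

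For part (1), I would first observe that $F^{p,p}=B^{p,p}$: since the inner and outer exponents agree, Fubini's theorem gives $\|f\|_{F^{p,p}}=(\int_{\rn}\sum_j|\Delta_jf|^p)^{1/p}=(\sum_j\|\Delta_jf\|_{L^p}^p)^{1/p}=\|f\|_{B^{p,p}}$. For $X=B^{p,p}$ I would write $g=\sum_{j\ge 0}\Delta_jg$ and use the quasi-norm inequality $\|\sum_j h_j\|_{L^p}^p\le\sum_j\|h_j\|_{L^p}^p$, valid for $0<p\le1$, to get $\|g\|_{L^p}^p\le\sum_j\|\Delta_jg\|_{L^p}^p=\|g\|_{B^{p,p}}^p$; the same computation with $\Box_k$ in place of $\Delta_j$ handles $M^{p,p}$. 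For $X=h^p$ I would instead use that $\psi_t\ast g\to(\int\psi)\,g$ almost everywhere as $t\to0^+$ (at Lebesgue points of the measurable function $g$), so that $|\int\psi|\,|g(x)|\le\sup_{0<t<1}|\psi_t\ast g(x)|$ a.e.; taking $L^p$ norms yields $\|g\|_{L^p}\lesssim\|g\|_{h^p}$.

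For part (2), fix $\varphi\in C_c^\infty(\rn)$, so that $\widehat{\varphi}$ is smooth and decays faster than any polynomial. On the Besov/Triebel--Lizorkin side I would decompose $\varphi\ast f=\sum_{j\ge0}\varphi\ast\Delta_jf$ and estimate each block. Writing $\widetilde\phi_j=\phi_{j-1}+\phi_j+\phi_{j+1}$, so that $\Delta_jf=\mathscr{F}^{-1}(\widetilde\phi_j)\ast\Delta_jf$, I obtain $\varphi\ast\Delta_jf=G_j\ast\Delta_jf$ with $G_j=\mathscr{F}^{-1}(\widehat{\varphi}\,\widetilde\phi_j)$. Since $\Delta_jf$ is band-limited to $\{|\xi|\lesssim 2^j\}$, the Peetre maximal inequality $\|(\Delta_jf)^*_a\|_{L^p}\lesssim\|\Delta_jf\|_{L^p}$ (valid for $a>n/p$ and band-limited functions, with constant independent of $j$) combined with the pointwise bound $|G_j\ast\Delta_jf(x)|\le\big(\int|G_j(y)|(1+2^j|y|)^a\,dy\big)(\Delta_jf)^*_a(x)$ gives $\|\varphi\ast\Delta_jf\|_{L^p}\lesssim(\int|G_j(y)|(1+2^j|y|)^a\,dy)\|\Delta_jf\|_{L^p}$. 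A rescaling $\xi=2^j\zeta$ turns the weighted integral into $\int|H_j(u)|(1+|u|)^a\,du$ with $H_j=\mathscr{F}^{-1}(\widehat{\varphi}(2^j\cdot)\,\widetilde\phi)$, and the rapid decay of $\widehat{\varphi}$ forces this to be $\lesssim_N 2^{-jN}$ for every $N$. Hence $\|\varphi\ast\Delta_jf\|_{L^p}\lesssim_N 2^{-jN}\|\Delta_jf\|_{L^p}$, and the $p$-triangle inequality yields $\|\varphi\ast f\|_{L^p}^p\lesssim_N\sum_j 2^{-jNp}\|\Delta_jf\|_{L^p}^p$. Choosing $N>|s|$ and bounding $2^{js}\|\Delta_jf\|_{L^p}\le\|f\|_{B^{p,\infty}_s}$ lets me sum the geometric series, so $\|\varphi\ast f\|_{L^p}\lesssim\|f\|_{B^{p,\infty}_s}\lesssim\|f\|_{B^{p,q}_s}$; the Triebel--Lizorkin case then follows from $F^{p,q}_s\hookrightarrow B^{p,\infty}_s$.

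The modulation case is identical in spirit, replacing $\Delta_j$ by $\Box_k$ and the dyadic radius $2^j$ by $\langle k\rangle$: the same band-limited convolution estimate gives $\|\varphi\ast\Box_kf\|_{L^p}\lesssim_N\langle k\rangle^{-N}\|\Box_kf\|_{L^p}$, and summing in $\ell^p$ against the rapidly decaying weight yields $M^{p,q}_s\hookrightarrow L^p_0$. For $\mathscr{F}M^{q,p}$ I would pass to the Fourier side, where $\widehat{\varphi\ast f}=\widehat{\varphi}\,\widehat{f}$ and multiplication by the Schwartz function $\widehat{\varphi}$ acts boundedly on $M^{q,p}$, and then extract the $L^p$ norm via the frequency-uniform decomposition as before. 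I expect the main obstacle to be precisely the band-limited convolution step: for $p<1$ neither Young's nor Minkowski's inequality is available, so controlling $\|\varphi\ast\Delta_jf\|_{L^p}$ by $\|\Delta_jf\|_{L^p}$ genuinely requires the Peetre maximal function (or an equivalent Nikolskii-type argument) together with careful tracking of how the rapid decay of $\widehat{\varphi}$ produces the factor $2^{-jN}$ that absorbs the weight $2^{js}$ for every $s\in\bbR$.
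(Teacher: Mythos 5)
Your part (1) and your part (2) arguments for $M^{p,q}_s$, $B^{p,q}_s$ and $F^{p,q}_s$ are correct, and for the Besov and Triebel--Lizorkin cases you take a genuinely different route from the paper. The paper proves only the modulation case directly, via the reproducing identity $\Box_k=\sum_{|l|\leq c_n}\Box_{k+l}\circ\Box_k$ and the convolution inequality $\|F\ast G\|_{L^p}\lesssim\|F\|_{L^p}\|G\|_{L^p}$ for band-limited functions, arriving at $\|\varphi\ast f\|_{L^p}\lesssim\|f\|_{M^{p,\infty}_s}\|\varphi\|_{M^{p,p}_{-s}}$; it then deduces the Besov and Triebel--Lizorkin cases from the cited embeddings $B^{p,q}_s\subset M^{p,q}_{s-\delta(p,q)}$ and $F^{p,q}_s\subset B^{p,q}_{s-\delta}$. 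Your direct Littlewood--Paley argument --- the Peetre maximal inequality together with the kernel bound $\int|G_j(y)|(1+2^j|y|)^a\,dy\lesssim_N2^{-jN}$ obtained by rescaling and integration by parts --- is self-contained and avoids the nontrivial Besov-to-modulation embedding theorem, at the cost of invoking the Peetre maximal inequality where the paper uses the band-limited convolution inequality.

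The genuine gap is the case $Y=\mathscr{F}M^{q,p}$. After the two steps you describe (pass to the Fourier side; multiplication by $\widehat{\varphi}$ is bounded on $M^{q,p}$), the only information you retain about $w:=\widehat{\varphi}\,\widehat{f}$ is its $M^{q,p}$ norm, so the remaining ``extraction'' step would have to be the general inequality $\|\mathscr{F}^{-1}w\|_{L^p}\lesssim\|w\|_{M^{q,p}}$, i.e.\ the embedding $\mathscr{F}M^{q,p}\hookrightarrow L^p$. That embedding is false: for $q=\infty$ take $w\equiv1$; then $\widehat{w}=\delta_0$, hence $\Box_kw=\sigma_k(0)\,w$ vanishes for all but finitely many $k$, so $w\in M^{\infty,p}$, while $\mathscr{F}^{-1}w=\delta_0\notin L^p$. (This is exactly why the lemma claims only $Y\hookrightarrow L^p_0$, a bound after mollification by $\varphi$.) At the level of blocks, the obstruction is that Nikolskii's inequality runs the wrong way: one has $\|\mathscr{F}^{-1}\Box_kw\|_{L^p}=\|\sigma_k\widehat{w}\|_{L^p}\lesssim\|\sigma_k\widehat{w}\|_{L^\infty}\leq\|\Box_kw\|_{L^1}$, and for band-limited functions $\|\cdot\|_{L^1}\lesssim\|\cdot\|_{L^q}$ holds only when $q\leq1$; so the $L^q$ block norms recorded by the $M^{q,p}$ norm cannot control what you need when $q>1$.

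The repair must keep the decay of $\widehat{\varphi}$ inside the extraction rather than discard it after the multiplication step. One way: show that multiplication by $\widehat{\varphi}$ maps $M^{q,p}$ into $M^{p,p}$, not merely into $M^{q,p}$. Both features of $\varphi\in C_c^{\infty}$ enter here: since $\varphi$ has compact support, $\widehat{\varphi}\,\Box_lu$ is still band-limited to a bounded neighborhood of $Q_l$, so $\Box_k(\widehat{\varphi}\,\Box_lu)=0$ unless $|k-l|\lesssim1$; and since $\widehat{\varphi}\in L^r$ for every $r>0$, H\"older's inequality with $1/p=1/r+1/q$ gives $\|\widehat{\varphi}\,\Box_lu\|_{L^p}\lesssim\|\Box_lu\|_{L^q}$. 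Combining these with the band-limited convolution inequality yields $\|\widehat{\varphi}u\|_{M^{p,p}}\lesssim\|u\|_{M^{q,p}}$; the Fourier invariance of $M^{p,p}$ and your part (1) then finish the proof. Alternatively, follow the paper: $\mathscr{F}M^{q,p}\subset\mathscr{F}M^{\infty,p}=W^{p,\infty}\subset M^{p,\infty}$, and apply the already proven case $Y=M^{p,\infty}$.
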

\begin{proof}
  We first verify that $\|g\|_{L^p}\lesssim \|g\|_{h^p}$ for measurable functions $g\in h^p$.
  Take a $C_{c}^{\fy}$ function $\psi$ with $\int_{\mathbb{R}^n}\psi(x)dx=1$.
  We have
  \be
  \lim_{t\rightarrow 0}\psi_t\ast g=g\ \ \ \  a.e.\ \text{on}\  \rn.
  \ee
  Thus,
  \be
  \|g\|_{L^p}=\|\lim_{t\rightarrow 0}\psi_t\ast g\|_{L^p}
  \leq
\|\sup_{0<t<1}|\psi_t\ast g|\|_{L^p}=\|g\|_{h^p},
  \ee
 where we use the definition of $h^p$ in the last equality.

 For $X=M^{p,p}, B^{p,p}, F^{p,p}$, the inequality $\|g\|_{L^p}\lesssim \|g\|_X$ follows directly by
  the definition of function space and the triangle inequality, we only show the details for $B^{p,p}$:
  \be
  \begin{split}
    \|f\|_{L^p}
    =
    \bigg\|\sum_{j=0}^{\infty}\Delta_jf\bigg\|_{L^p}
    \leq
    \left(\sum_{j=0}^{\infty}\|\Delta_jf\|_{L^p}^p \right)^{1/p}
    =
    \|f\|_{B^{p,p}}=\|f\|_{F^{p,p}}.
  \end{split}
  \ee

 Next, we turn to the proof of statement (2).
 First, we deal with the case $Y=M^{p,q}_s$.
 Using the triangle inequality, we have
 \be
 \begin{split}
   \|\va\ast f\|_{L^p}
   =
   \bigg\|\sum_{k\in \mathbb{Z}^n}\Box_k (\va\ast f)\bigg\|_{L^p}
   \leq
   \left(\sum_{k\in \mathbb{Z}^n}\|\Box_k (\va\ast f)\|^p_{L^p}\right)^{1/p}.
 \end{split}
 \ee
Moreover, there exists a constant $c_n$ such that $\Box_k=\sum_{|l|\leq c_n}\Box_{k+l}\circ \Box_k$, then
 \be
 \begin{split}
 \|\Box_k (\va\ast f)\|_{L^p}
 =
 \bigg\|\sum_{|l|\leq c_n}\Box_{k+l}\va\ast \Box_k f\bigg\|_{L^p}
 \lesssim
 \sum_{|l|\leq c_n}\|\Box_{k+l}\va\|_{L^p}\|\Box_k f\|_{L^p}.
 \end{split}
 \ee
The above two estimates then yield that
\be
\begin{split}
  \|\va\ast f\|_{L^p}
  \lesssim &
  \left(\sum_{k\in \mathbb{Z}^n}\left(\sum_{|l|\leq c_n}\|\Box_{k+l}\va\|_{L^p}\|\Box_k f\|_{L^p}\right)^p\right)^{1/p}
  \\
  \leq &
  \sup_{k\in \mathbb{Z}^n}\langle k\rangle^s\|\Box_k f\|_{L^p}
  \left(\sum_{k\in \mathbb{Z}^n}\left(\langle k\rangle^{-s}\sum_{|l|\leq c_n}\|\Box_{k+l}\va\|_{L^p}\right)^p\right)^{1/p}
  \\
  \lesssim &
  \|f\|_{M^{p,\fy}_s}\|\va\|_{M^{p,p}_{-s}}\lesssim \|f\|_{M^{p,q}_s},
\end{split}
\ee
where we use the fact $\va\in M^{p,q}_{-s}$ and $M^{p,q}_s\subset M^{p,\fy}_s$.

For $Y=\mathscr{F}M^{q,p}$. By the conclusion for $Y=M^{p,\fy}$,
we have $\|\va\ast f\|_{L^p}\lesssim \|f\|_{M^{p,\fy}}.$
Note that $\mathscr{F}M^{q,p}$ is equal to the Wiener amalgam space $W^{p,q}$ (see\cite[pp.10]{GuoZhao2020JFA}),
and recall the embedding relations (see \cite[Lemma 2.5]{GuoWuYangZhao2017JFA}):
\be
     \mathscr{F}M^{q,p}\subset \mathscr{F}M^{\fy,p}=W^{p,\fy}\subset M^{p,\fy}.
\ee
The conclusion follows by
\be
\|\va\ast f\|_{L^p}
  \lesssim \|f\|_{M^{p,\fy}}\lesssim \|f\|_{\mathscr{F}M^{q,p}}.
\ee

For $Y=B^{p,q}_s$, there exists a constant $\d(p,q)>0$ such that $B^{p,q}_s\subset M^{p,q}_{s-\d(p,q)}$ (see \cite[Theorem 1.2]{GuoFanZhao2018SCM}). This and the conclusion for $Y=M^{p,q}_{s-\d(p,q)}$ imply that
\be
\|\va\ast f\|_{L^p}\lesssim \|f\|_{M^{p,q}_{s-\d(p,q)}}\lesssim \|f\|_{B^{p,q}_s}.
\ee

For $Y=F^{p,q}_s$, take a constant $\d>0$, then $F^{p,q}_{s}\subset B^{p,q}_{s-\d}$ (see \cite[pp.47]{Triebel2010}).
This and the conclusion for $Y=B^{p,q}_{s-\d}$ imply that
\be
\|\va\ast f\|_{L^p}\lesssim \|f\|_{B^{p,q}_{s-\d}}\lesssim \|f\|_{F^{p,q}_s}.
\ee


\end{proof}

\section{Main Theorems}
In this section, we give our main theorems and their proofs.
Suppose $X$ is a (Quasi-)Banach space with
translation invariant: $\|T_yf\|_X=\|f\|_X$,
where
$T_yf(x):= f(x-y)$ is the translation of $f$.

\begin{theorem}\label{Thm, lp}
  Let $0<p<1$, $\Phi\in L_{loc}^{1}(\rn\bs\{0\})$. Suppose $C_c^{\infty}(\rn)\subset X$ and $\|g\|_{L^p}\lesssim \|g\|_X$ for all measurable function $g\in X$.
  We have
  \be
  H_{\Phi}\ \text{is bounded on}\ X\Longleftrightarrow H_{\Phi}=0.
  \ee
\end{theorem}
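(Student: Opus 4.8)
The implication $H_{\Phi}=0\Rightarrow H_\Phi$ bounded is trivial, so the entire content lies in the forward direction, which I would prove in contrapositive form: if $H_\Phi\neq 0$ then $H_\Phi$ is unbounded on $X$. My first move is to rewrite the operator in a one–parameter dilation form. Passing to polar coordinates $y=r\omega$ and then substituting $s=1/r$, the radial-only dependence $f(x/|y|)$ collapses the angular integration and yields
\be
H_\Phi f(x)=\int_0^{\fy}K(s)\,f(sx)\,ds,\qquad
K(s)=s^{-2}g(1/s),\quad g(r)=r^{n-1}\int_{\mathbb{S}^{n-1}}\Phi(r\omega)\,d\omega .
\ee
Here $K\in L^1_{loc}(0,\fy)$ because $\Phi\in L^1_{loc}(\rn\bs\{0\})$, and one checks that $H_\Phi=0$ on $C_c^{\fy}$ if and only if $K\equiv 0$ a.e.\ (for fixed $x$, letting $f$ range over bumps travelling along the ray $\mathbb{R}_+x$ recovers $K$ on arbitrary intervals). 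Thus the hypothesis $H_\Phi\neq 0$ becomes the concrete statement $K\not\equiv 0$.

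The plan is then to exploit the two structural assumptions on $X$: translation invariance and $\|g\|_{L^p}\lesssim\|g\|_X$. Fix a direction $\omega_0\in\mathbb{S}^{n-1}$ and a single nonzero bump $f\in C_c^{\fy}\subset X$, and test the operator on the translates $T_{R\omega_0}f$. Translation invariance gives $\|T_{R\omega_0}f\|_X=\|f\|_X$, a constant, while the embedding gives $\|H_\Phi(T_{R\omega_0}f)\|_X\gtrsim\|H_\Phi(T_{R\omega_0}f)\|_{L^p}$. Consequently, to prove unboundedness it suffices to exhibit one bump $f$ for which
\be
\|H_\Phi(T_{R\omega_0}f)\|_{L^p}\longrightarrow\fy\qquad(R\to\fy),
\ee
since boundedness on $X$ would force $\sup_R\|H_\Phi(T_{R\omega_0}f)\|_{L^p}<\fy$.

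The reason such blow-up must occur — and the reason $p<1$ is essential — is a scaling count. Writing $x=(x_1,x')$ with $x_1=\langle x,\omega_0\rangle$, the integrand $K(s)f(sx-R\omega_0)$ is nonzero only when $sx$ lies near $R\omega_0$, i.e.\ when $s\approx R/x_1$ and $|x'|\lesssim 1$; so $H_\Phi(T_{R\omega_0}f)$ lives on a long thin ``tube'' of longitudinal extent $x_1\sim R$ and bounded transverse width. On this tube, after the substitution $s=R/x_1$ (with Jacobian $dx_1=R\,c^{-2}dc$, $c:=R/x_1$), a direct computation shows each point sees essentially a single scale and $|H_\Phi(T_{R\omega_0}f)(x)|\sim R^{-1}|K(c)|$, whence
\be
\|H_\Phi(T_{R\omega_0}f)\|_{L^p}^p\ \sim\ R^{1-p}\int_E |K(c)|^{p}\,c^{p-n-1}\,dc
\ee
for a suitable scale set $E$. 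The exponent $1-p$ is positive precisely when $p<1$, producing divergence; for $p=1$ the Minkowski-type balance makes this bounded, and for $p>1$ it even decays, which is consistent with the known positive boundedness results in those ranges.

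The hard part will be making the pointwise lower bound on the tube rigorous, because $K$ is merely $L^1_{loc}$ and may change sign or phase, so I cannot replace $K(s)$ by $K(c)$ by continuity. I would handle this by a Lebesgue-point/Egorov extraction. Since $K\not\equiv 0$, there is a positive-measure set on which $|K|\geq 2\delta'$ and $\arg K$ lies in a fixed small arc; after multiplying by a suitable unimodular constant $c_0$ this gives $\operatorname{Re}(c_0 K)\geq\delta'$ there, and at a.e.\ such point the symmetric averages $\tfrac1{2r}\int_{c-r}^{c+r}\operatorname{Re}(c_0K)$ converge to the value $\geq\delta'$. Egorov's theorem then yields a compact subset $E$ of positive measure on which this convergence is uniform, so that $\operatorname{Re}\big(c_0\,\tfrac1{2r}\int_{c-r}^{c+r}K\big)\geq\delta$ for all $c\in E$ and all small $r$ simultaneously. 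Choosing $f(v)=h(v_1)\kappa(v')$ with $h$ even and radially nonincreasing lets me write the relevant longitudinal integral as a layer-cake superposition of such symmetric averages (each controlled uniformly on $E$), while the transverse factor $\kappa\big((s+\tfrac{s}{R}\mu)x'\big)\to\kappa(sx')$ uniformly by uniform continuity, and is bounded below on a small transverse ball. Assembling these uniform estimates gives $\operatorname{Re}(c_0 H_\Phi(T_{R\omega_0}f)(x))\gtrsim R^{-1}\kappa(cx')$ throughout the tube for all large $R$, and integrating over the tube delivers the $R^{1-p}$ lower bound above. The delicate points to get right are exactly these uniformities over $c\in E$ and the interplay between the sign-changes of $K$, the radial weight $h$, and the transverse profile $\kappa$.
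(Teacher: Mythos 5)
Your proposal is correct in substance and shares the paper's overall skeleton: reduce to a one-dimensional kernel by polar coordinates, test on translates of a fixed bump pushed to infinity (so translation invariance keeps $\|T_{R\omega_0}f\|_X$ constant while the embedding into $L^p$ forces a contradiction), and exploit the $p<1$ counting that produces a divergent power of $R$. But the technical core is genuinely different. The paper works with a \emph{single} Lebesgue point $r_0$ of the radialized kernel $\phi$ (normalized so $\phi(r_0)=1$), restricts the scales to a \emph{shrinking} window $A_j=[1-2^{-\theta j},1+2^{-\theta j}]$ with the tuned exponent $\theta=\frac{1-p}{2}$, and splits $H_\Phi g_j$ into a main term (kernel replaced by the constant $\phi(1)$) plus an error term bounded by $\int_{A_j}|\phi(r)-\phi(1)|\,dr=\epsilon_j|A_j|$, where $\epsilon_j\to 0$ is exactly the Lebesgue-point property; the choice of $\theta$ makes the main contribution $2^{(1-p)j/2}$ dominate the error $\epsilon_j^{p}2^{(1-p)j/2}$. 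You instead keep a \emph{fixed} positive-measure set $E$ of good scales, uniformize the Lebesgue averages by Egorov, fix the phase with a rotation $c_0$, and obtain the pointwise lower bound through a layer-cake decomposition of a symmetric-decreasing longitudinal profile. Your route avoids the tuned window and gives the faster blow-up $R^{1-p}$ in $\|\cdot\|_{L^p}^p$ (versus the paper's $2^{(1-p)j/2}$ with $R=2^j$); the paper's route avoids Egorov and all sign/phase bookkeeping, since near one Lebesgue point the kernel is compared to a single constant.

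One soft spot to repair before this is complete: the layer-cake step requires the weight multiplying $K(s)$ to be even and nonincreasing in $|s-c|$, but your actual weight is the product $h(x_1(s-c))\,\kappa(sx')$, whose transverse factor is not symmetric about $c$; and if you instead pull $\kappa$ out by uniform continuity, the resulting error is of size $\omega_\kappa(1/R)\int|K(s)|\,h(x_1(s-c))\,ds$, whose control needs an upper bound $\int_{c-r}^{c+r}|K|\lesssim r$ uniform over $c\in E$ — something your Egorov step does not provide, since it only bounds averages of $\operatorname{Re}(c_0K)$ from below. Both issues are fixable by standard means: either intersect the good set with $\{|K|\leq M\}$ and run Egorov for the averages of $|K|$ as well, or (simpler) take $\kappa\equiv 1$ on $B(0,1/2)$ and shrink the tube width so that $|sx'|\leq 1/2$ whenever the longitudinal factor is nonzero, which makes $\kappa(sx')\equiv 1$ there and removes the transverse factor exactly.
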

\begin{proof}
  The ``if'' part is trivial. We only present the proof for the ``only if'' part.

  We first point out that $H_{\Phi}f$ is pointwise well-defined for any smooth function $f$ supported away from the origin.
  In fact, in this case we denote $E_{x}:=\{y: f\big(\frac{x}{|y|}\big)\neq 0\}$.
  Observe that for any fixed $x\in \rn$, $E_x$ is a bounded measurable set away from the origin.
  Recalling that $\Phi\in L_{loc}^{1}(\rn\bs\{0\})$, then the following integral is convergent:
  \be
  H_{\Phi}f(x)=\int_{\mathbb{R}^n}\Phi(y)f\left(\frac{x}{|y|}\right)dy=\int_{E_x}\Phi(y)f\left(\frac{x}{|y|}\right)dy.
  \ee
  Moreover, $H_{\Phi}f$ is a measurable function on $\rn$.
  Using the polar coordinates, we write
  \be
  \begin{split}
    H_{\Phi}f(x)
    = &\int_{\mathbb{R}^n}\Phi(y)f\left(\frac{x}{|y|}\right)dy
    \\
    = &
    \int_{0}^{\fy}\int_{\mathbb{S}^{n-1}}\Phi(ry')f(x/r)d\sigma(y')r^{n-1}dr
    = : \int_{0}^{\fy}\phi(r)f(x/r)dr,
  \end{split}
  \ee
  where
  \be
  \phi(r):=\int_{\mathbb{S}^{n-1}}\Phi(ry')r^{n-1}d\sigma(y').
  \ee
It follows by $\Phi\in L_{loc}^{1}(\rn\bs\{0\})$ that $\phi\in L_{loc}^{1}(\mathbb{R}^+)$.
Since $\phi$ is a measurable function on $(0,\fy)$, almost every point in $(0,\fy)$ is
a Lebesgue point.
Hence, it suffices to verify that if there exists a Lebesgue point $r_0>0$ such that $\phi(r_0)\neq 0$,
then $H_{\Phi}$ is unbounded.

Without loss of  generality, we assume that $r_0=1$ is a Lebesgue point of $\phi$, satisfying
$\phi(1)=1$. The proof for other cases is similar.

Taking $\theta=\frac{1-p}{2}$, we set
\be
A_j=[1-2^{-\theta j}, 1+2^{-\theta j}].
\ee

Take $g$ to be a nonnegative smooth function supported on $B(0,2)$, satisfying $g=1$ on $B(0,1)$.
Denote by $g_j(x):=g(x-2^je_0)$ the translation of $g$, where $e_0=(1,0,0,\cdots,0)$ be the unit vector on $\rn$.
For sufficiently large $j$, we have
\be
(2^j+2)(1-2^{-\theta j})+1<(2^j-2)(1+2^{-\theta j})-1.
\ee
Denote
\be
E_{j}:= \bigcup_{a\in \mathbb{R}: (2^j+2)(1-2^{-\theta j})+1\leq a\leq (2^j-2)(1+2^{-\theta j})-1}B(ae_0,1/2).
\ee
We have $|E_j|\sim 2^{(1-\theta)j}$.
For $x\in E_{j}$, we have
\be
(2^j+2)(1-2^{-\theta j})\leq |x|\leq (2^j-2)(1+2^{-\theta j}),\ \ \ |x|\sim 2^{j}.
\ee
This implies that
\ben\label{pf, 2}
\frac{|x|}{2^j-2}\leq 1+2^{-\theta j},\ \ \  \frac{|x|}{2^j+2}\geq 1-2^{-\theta j}.
\een
Recall $\text{supp}g\subset B(0,2)$.  For every $x\in E_{j}$,
set
\be
E^0_{j,x}:=\{r: g_j\big(\frac{x}{r}\big)\neq 0\},\ \ \ \ \ E^1_{j,x}:=\{r: g_j\big(\frac{x}{r}\big)=1\}.
\ee
By a direct calculation and \eqref{pf, 2}, we deduce that for $x\in E_j$,
\ben
E_{j,x}^1\subset E_{j,x}^0\subset \left(\frac{|x|}{2^j+2}, \frac{|x|}{2^j-2}\right)\subset A_j.
\een
Since $|x|\sim 2^{j}$ for $x\in E_j$, we have the upper estimate of $|E_{j,x}^0|$:
\be
|E^0_{j,x}|\leq \left|\frac{|x|}{2^j-2}-\frac{|x|}{2^j+2}\right|\sim 2^{-j}.
\ee
Next, we turn to the lower estimate of $|E^1_{j,x}|$.
For $x\in E_j$, write $x=ae_0+y$ for $y\in B(0,1/2)$, we have
\be
\begin{split}
  r\in E^1_{j,x}
  \Longleftarrow
  \left|\frac{x}{r}-2^je_0\right|\leq 1
  & \Longleftrightarrow
  \left|\frac{ae_0+y}{r}-2^je_0\right|\leq 1
  \\
  & \Longleftarrow
  \left|\frac{ae_0}{r}-2^je_0\right|+\left|\frac{y}{r}\right|\leq 1
  \\
  & \Longleftarrow
  \left|\frac{a}{r}-2^j\right|\leq \frac{1}{4},\ \ \ (\text{if}\ r\geq \frac{2}{3})
  \\
  & \Longleftrightarrow
  r\in \left[\frac{a}{2^j+1/4}, \frac{a}{2^j-1/4}\right].
\end{split}
\ee
Observe that
\be
\lim_{j\rightarrow \fy}\frac{a}{2^j+1/4}=\lim_{j\rightarrow \fy}\frac{a}{2^j-1/4}=1
\ee
for $a\in [(2^j+2)(1-2^{-\theta j})+1, (2^j-2)(1+2^{-\theta j})-1]$.
For sufficiently large $j$, we deduce that
$\left[\frac{a}{2^j+1/4}, \frac{a}{2^j-1/4}\right]\subset [2/3,\fy)$.
Hence,
\be
\left[\frac{a}{2^j+1/4}, \frac{a}{2^j-1/4}\right]\subset E_{j,x}^1.
\ee
The lower estimate of $|E_{j,x}^1|$ follows by
\be
|E_{j,x}^1|\geq \left|\frac{a}{2^j-1/4}-\frac{a}{2^j+1/4}\right|\sim 2^{-j}.
\ee
The combination of lower estimate of $|E_{j,x}^1|$ and upper estimate of $|E_{j,x}^0|$ yields that
\be
2^{-j}\lesssim |E^1_{j,x}|\leq |E^0_{j,x}|\lesssim 2^{-j},\ \ \ \text{or equivalent:}\ \ |E^1_{j,x}|\sim |E^0_{j,x}|\sim 2^{-j}.
\ee
Next, we divide the Hausdorff operator into main term and error term by
  \be
  \begin{split}
    H_{\Phi}g_j(x)
    &=
    \int_{0}^{\fy}\phi(r)g_j(x/r)dr
    =
    \int_{0}^{\fy}\phi(1)g_j(x/r)dr+\int_{0}^{\fy}(\phi(r)-\phi(1))g_j(x/r)dr
    \\
    &= :
    H_{\Phi}^Mg_j(x)+H_{\Phi}^Eg_j(x).
  \end{split}
  \ee
Let us first turn to the estimate of main term.
For $x\in E_j$, we have
  \be
  \begin{split}
    H_{\Phi}^Mg_j(x)
    =
    \int_{0}^{\fy}g_j(x/r)dr
    \geq
    \int_{E_{j,x}^1}g_j(x/r)dr
    = |E_{j,x}^1|\sim 2^{-j}.
  \end{split}
\ee
Recalling $|E_j|\sim 2^{(1-\theta)j}$ and $\theta=\frac{1-p}{2}$, we have following estimate of the main term:
  \ben\label{estimate of main term}
  \begin{split}
    \|H_{\Phi}^Mg_j\|^p_{L^p(E_j)}
    \gtrsim &
    2^{-jp}|E_j|\sim 2^{-jp}2^{(1-\theta)j}=2^{\frac{(1-p)j}{2}}.
  \end{split}
\een

On the other hand,
\ben\label{pf, 3}
\begin{split}
  \|H_{\Phi}^Eg_j\|^p_{L^p(E_j)}
  \leq &
  \|H_{\Phi}^Eg_j\|^p_{L^1(E_j)}|E_j|^{1-p}
  \\
  \leq &
  \left(\int_{E_j}\int_{0}^{\fy}|\phi(r)-\phi(1)|g_j(x/r)drdx\right)^p|E_j|^{1-p}
  \\
  = &
  \left(\int_{E_j}\int_{E_{j,x}^0}|\phi(r)-\phi(1)|g_j(x/r)drdx\right)^p|E_j|^{1-p}.
\end{split}
\een
Note that
\be
\{(x,r): x\in E_j, r\in E_{j,x}^0\}\subset \{(x,r): r\in A_j, x\in \rn\}.
\ee
We deduce that
\be
\begin{split}
  &\int_{E_j}\int_{E_{j,x}^0}|\phi(r)-\phi(1)|g_j(x/r)drdx
  \\
  \leq &
  \int_{A_j}|\phi(r)-\phi(1)|\int_{\rn}g_j(x/r)dxdr
  \\
  = &
  \|g\|_{L^1}\int_{A_j}|\phi(r)-\phi(1)|r^ndr
  \lesssim  \int_{A_j}|\phi(r)-\phi(1)|dr=\ep_j|A_j|\lesssim \ep_j2^{-\theta j},
\end{split}
\ee
where $\ep_j\rightarrow 0^+$ as $j\rightarrow \fy$.
Combining this with \eqref{pf, 3}, we have
\ben\label{estimate of error term}
\begin{split}
  \|H_{\Phi}^Eg_j\|^p_{L^p(E_j)}
  \lesssim
  \ep_j^p2^{-\theta pj}|E_j|^{1-p}
  \lesssim
  \ep_j^p2^{-\theta pj}2^{(1-\theta)(1-p)j}=\ep_j^p2^{\frac{(1-p)j}{2}}.
\end{split}
\een
By \eqref{estimate of main term} and \eqref{estimate of error term}, there exist two constants $C_1$ and $C_2$ such that
\be
\|H_{\Phi}^Mg_j\|^p_{L^p(E_j)}\geq C_12^{\frac{(1-p)j}{2}},\ \ \ \|H_{\Phi}^Eg_j\|^p_{L^p(E_j)}\leq C_2\ep_j^p2^{\frac{(1-p)j}{2}}.
\ee
For sufficiently large $j$ such that $C_2\ep_j^p\leq C_1/2$, we have
\be
\begin{split}
  \|H_{\Phi}g_j\|^p_{L^p}
  \geq &
  \|H_{\Phi}g_j\|^p_{L^p(E_j)}
  \\
  \geq &
  \|H_{\Phi}^Mg_j\|^p_{L^p(E_j)}-\|H_{\Phi}^Eg_j\|^p_{L^p(E_j)}
  \\
  \geq &
  C_12^{\frac{(1-p)j}{2}}-C_2\ep_j^p2^{\frac{(1-p)j}{2}}\geq (C_1/2)2^{\frac{(1-p)j}{2}}.
\end{split}
\ee
However, the boundedness of $H_{\Phi}$ yields that
\be
\|H_{\Phi}g_j\|^p_{L^p}
\lesssim \|H_{\Phi}g_j\|^p_X
\lesssim \|g_j\|^p_X=\|g\|^p_X,
\ee
which leads to a contradiction.
\end{proof}
Recall that all the spaces $L^p,h^p, M^{p,p}, B^{p,p}, F^{p,p}$ are translation invariant.
The following corollary is a direct conclusion of Lemma \ref{lemma, ebd} and Theorem \ref{Thm, lp}.
\begin{corollary}
 Let $0<p<1$, $\Phi\in L_{loc}^{1}(\rn\bs\{0\})$. We have
  \be
  H_{\Phi}\ \text{is bounded on}\ X\Longleftrightarrow H_{\Phi}=0,
  \ee
  where $X=L^p,h^p, M^{p,p}, B^{p,p}, F^{p,p}$.
\end{corollary}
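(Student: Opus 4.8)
The plan is to obtain the Corollary as a direct application of Theorem \ref{Thm, lp}. That theorem shows $H_\Phi$ is bounded on $X \Longleftrightarrow H_\Phi = 0$ whenever $X$ is a translation invariant (Quasi-)Banach space satisfying $C_c^\infty(\rn)\subset X$ and $\|g\|_{L^p}\lesssim\|g\|_X$ for every measurable $g\in X$. So the whole task reduces to verifying these three structural hypotheses, one by one, for each space in the list $L^p, h^p, M^{p,p}, B^{p,p}, F^{p,p}$ with $0<p<1$. As before the ``if'' direction is immediate, and the content lives in the ``only if'' direction already packaged inside Theorem \ref{Thm, lp}.

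For the first hypothesis I would simply invoke the fact, recalled just above the statement, that each of $L^p, h^p, M^{p,p}, B^{p,p}, F^{p,p}$ is translation invariant, and note that each is a Quasi-Banach space for $0<p<1$. For the inclusion $C_c^\infty(\rn)\subset X$, I would use $C_c^\infty(\rn)\subset\mathscr{S}(\rn)$ together with the standard fact that Schwartz functions lie in all of these spaces: for $M^{p,p}$ this is recorded in the Definition, for $B^{p,p}$ and $F^{p,p}$ it is the usual embedding $\mathscr{S}\subset B^{p,q}_s, F^{p,q}_s$, for $h^p=F^{p,2}_0$ it follows from the same, and for $L^p$ it is trivial.

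The third hypothesis is the only one with analytic content, and it is supplied directly by Lemma \ref{lemma, ebd}(1): for $X=h^p, M^{p,p}, B^{p,p}, F^{p,p}$ one has exactly $\|g\|_{L^p}\lesssim\|g\|_X$ for measurable $g\in X$. The single case not literally listed there is $X=L^p$ itself, for which the bound is the trivial equality $\|g\|_{L^p}=\|g\|_{L^p}$. With all three hypotheses confirmed, applying Theorem \ref{Thm, lp} space by space yields the stated equivalence for each $X$, completing the argument.

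I do not anticipate a genuine obstacle here, since the Corollary is essentially a repackaging of Theorem \ref{Thm, lp} via the embeddings of Lemma \ref{lemma, ebd}. The only points requiring a moment's care are remembering to treat $X=L^p$ separately from the four spaces covered by Lemma \ref{lemma, ebd}(1), and confirming that $C_c^\infty(\rn)$ indeed sits inside each $X$, so that the translated test functions $g_j$ constructed in the proof of Theorem \ref{Thm, lp} are legitimate elements of $X$ with $\|g_j\|_X=\|g\|_X$.
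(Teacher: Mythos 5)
Your proposal is correct and follows essentially the same route as the paper: the authors likewise obtain the corollary as a direct consequence of Theorem \ref{Thm, lp} combined with Lemma \ref{lemma, ebd}(1), after recalling that all five spaces are translation invariant. Your additional explicit checks (the trivial case $X=L^p$ and the inclusion $C_c^\infty(\rn)\subset X$ via $\mathscr{S}(\rn)$) are just the details the paper leaves implicit in calling it a ``direct conclusion.''
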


For more general frequency decomposition spaces such as $X=B^{p,q}_s$, $F^{p,q}_s$ or $M^{p,q}_s$,
the embedding condition $\|g\|_{L^p}\lesssim \|g\|_X$ is no longer valid.
We establish following theorem with the help of the modified embedding (see Lemma \ref{lemma, ebd}).

\begin{theorem}\label{Thm, local lp}
  Let $0<p<1$, $\Phi\in L_{loc}^{1}(\rn\bs\{0\})$. Suppose $C_c^{\infty}(\rn)\subset X$ and $X\hookrightarrow L^p_{0}(\rn)$. We have
  \be
  H_{\Phi}\ \text{is bounded on}\ X\Longleftrightarrow H_{\Phi}=0.
  \ee
\end{theorem}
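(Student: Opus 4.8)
The plan is to reduce, exactly as in the proof of Theorem \ref{Thm, lp}, to the case in which $r_0=1$ is a Lebesgue point of the radial average $\phi(r)=\int_{\mathbb{S}^{n-1}}\Phi(ry')r^{n-1}d\sigma(y')$ with $\phi(1)=1$, and to reuse the same test functions $g_j=T_{2^je_0}g$ and the same tube $E_j$ on which the main term is large. The only genuinely new feature is that the embedding now available is $X\hookrightarrow L^p_0(\rn)$, which controls $\|\va\ast f\|_{L^p}$ rather than $\|f\|_{L^p}$. So, instead of working with $H_\Phi g_j$ directly, I would fix one \emph{nonnegative} bump function $\va\in C_c^\infty(\rn)$ with $\va\geq 0$, $\int\va=1$, $\text{supp}\,\va\subset B(0,1/4)$ and $\va\geq c_0>0$ on $B(0,1/8)$, and run the main-term-versus-error-term dichotomy for $\va\ast H_\Phi g_j$. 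Splitting $H_\Phi g_j=H_\Phi^Mg_j+H_\Phi^Eg_j$ as before, linearity of convolution gives $\va\ast H_\Phi g_j=\va\ast H_\Phi^Mg_j+\va\ast H_\Phi^Eg_j$, and I would bound these two pieces from below and above respectively.

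For the main term the nonnegativity of $\va$ is the key. Since $g_j\geq 0$ and $\phi(1)=1$ we have $H_\Phi^Mg_j\geq 0$, and the computation of Theorem \ref{Thm, lp} gives $H_\Phi^Mg_j(z)\gtrsim 2^{-j}$ for every $z\in E_j$. Working on a slightly thinned and shortened tube $\widetilde E_j$ (radius $1/4$, ends removed) so that $B(x,1/8)\subset E_j$ for $x\in\widetilde E_j$, I would estimate $\va\ast H_\Phi^Mg_j(x)\geq\int_{B(x,1/8)}\va(x-z)H_\Phi^Mg_j(z)\,dz\gtrsim 2^{-j}$. As $|\widetilde E_j|\sim 2^{(1-\theta)j}$ with $\theta=(1-p)/2$, this yields the same growth as before, $\|\va\ast H_\Phi^Mg_j\|_{L^p(\widetilde E_j)}^p\gtrsim 2^{-jp}2^{(1-\theta)j}=2^{(1-p)j/2}$.

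The error term is where the work lies, and it is the \emph{main obstacle}. I would estimate $\va\ast H_\Phi^Eg_j$ only on $\widetilde E_j$, which suffices since the full $L^p$ norm dominates the norm over $\widetilde E_j$. Because $\text{supp}\,\va\subset B(0,1/4)$, the value $\va\ast H_\Phi^Eg_j(x)$ for $x\in\widetilde E_j$ only sees $H_\Phi^Eg_j(z)$ for $z$ in the fattened tube $E_j^+=\{z:\text{dist}(z,\widetilde E_j)<1/4\}$, on which $|z|\sim 2^j$ and $|E_j^+|\sim 2^{(1-\theta)j}$. The crucial geometric point, the analogue of the inclusion $E_{j,x}^0\subset A_j$, is that for $z\in E_j^+$ the condition $g_j(z/r)\neq 0$ forces $r$ into an interval $A_j'=[1-C2^{-\theta j},1+C2^{-\theta j}]$ of length $\sim 2^{-\theta j}$ about $1$; this follows from $g_j(z/r)\neq 0\Rightarrow |z|/r\in(2^j-2,2^j+2)$ together with the bounds on $|z|$ over $E_j^+$. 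After swapping the order of integration and using $\int_{\widetilde E_j}\va(x-z)\,dx\leq\mathbf{1}_{E_j^+}(z)$, I would obtain $\int_{\widetilde E_j}|\va\ast H_\Phi^Eg_j|\,dx\lesssim\int_{A_j'}|\phi(r)-\phi(1)|\,dr=\ep_j|A_j'|$, where $\ep_j\to 0$ precisely because $1$ is a Lebesgue point of $\phi$ and $A_j'$ shrinks to $\{1\}$. A single application of Hölder's inequality with exponent $1/p$ on $\widetilde E_j$ then gives $\|\va\ast H_\Phi^Eg_j\|_{L^p(\widetilde E_j)}^p\lesssim\ep_j^p2^{-\theta pj}|\widetilde E_j|^{1-p}\lesssim\ep_j^p2^{(1-p)j/2}$, matching the main term up to the vanishing factor $\ep_j^p$.

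Finally I would conclude as in Theorem \ref{Thm, lp}. The quasi-subadditivity $\|u+v\|_{L^p}^p\geq\|u\|_{L^p}^p-\|v\|_{L^p}^p$, valid for $0<p<1$, combines the two estimates into $\|\va\ast H_\Phi g_j\|_{L^p}^p\geq\|\va\ast H_\Phi g_j\|_{L^p(\widetilde E_j)}^p\geq(C_1/2)2^{(1-p)j/2}$ for all large $j$. On the other hand $H_\Phi g_j$ is a genuine measurable function, since $g_j$ is smooth and supported away from the origin for large $j$, so the embedding $X\hookrightarrow L^p_0(\rn)$ applies to the fixed $\va$, and together with the assumed boundedness of $H_\Phi$ on $X$ and the translation invariance $\|g_j\|_X=\|g\|_X$ it gives $\|\va\ast H_\Phi g_j\|_{L^p}^p\lesssim\|H_\Phi g_j\|_X^p\lesssim\|g_j\|_X^p=\|g\|_X^p$, a fixed constant. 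Letting $j\to\infty$ contradicts the lower bound, forcing $\phi=0$ almost everywhere and hence $H_\Phi=0$.
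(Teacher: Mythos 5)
Your proposal is correct and follows essentially the same route as the paper's own proof: convolve with a fixed nonnegative bump $\va$, invoke the embedding $X\hookrightarrow L^p_{0}(\rn)$, and rerun the main-term/error-term dichotomy of Theorem \ref{Thm, lp} on a shrunken tube, swapping the order of integration for the error term. The only differences are cosmetic normalizations: the paper takes $\va=1$ on $B(0,1)$ with $\mathrm{supp}\,\va\subset B(0,2)$ and works on $\Xi_j=\{x: B(x,2)\subset E_j\}$ so that the original interval $A_j$ applies directly, whereas you take a small-support bump with $\int\va=1$ and compensate with the fattened tube $E_j^+$ and the slightly enlarged interval $A_j'$ --- both yield the identical growth $2^{(1-p)j/2}$ and the same contradiction.
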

\begin{proof}
The ``if'' part is trivial. We focus on the proof for the ``only if'' part.
  As the proof of Theorem \ref{Thm, lp}, we write
  $H_{\Phi}f(x)=\int_{0}^{\fy}\phi(r)f(x/r)dr$,
  and assume
  $r_0=1$ is the Lebesgue point of $\phi$ with
$\phi(1)=1$. Let $g_j, A_j, E_j, E_{j,x}^0, E_{j,x}^1, H_{\Phi}^Mg_j, H_{\Phi}^Eg_j$ be as in the proof of Theorem \ref{Thm, lp}.
 Take a $C_c^{\fy}(\rn)$ nonnegative function $\va$ satisfying $\va(x)= 1$ on $B(0,1)$ and $\text{supp}\va\subset B(0,2)$.
 By the assumption we have
\be
\|\va \ast H_{\Phi}g_j\|_{L^p(\rn)}\lesssim \|H_{\Phi}g_j\|_{X}\lesssim \|g_j\|_X=\|g\|_X.
\ee
Set
\be
\Xi_j=\{x: B(x, 2)\subset E_j\}.
\ee
We have
\be
|\Xi_j|\sim |E_j|\sim 2^{(1-\theta)j}\ \ \ \ (j\rightarrow \fy).
\ee
Recall that $H_{\Phi}^Mg_j(x)\geq 0$ for $x\in \rn$ and
$H_{\Phi}^Mg_j(x)\gtrsim 2^{-j}$ for $x\in E_j$.
For $x\in \Xi_j$ we have
\be
\begin{split}
  \va \ast H_{\Phi}^Mg_j(x)
  = &
  \int_{\rn}\va(x-z)H_{\Phi}^Mg_j(z)dz
  \\
  \geq &
  \int_{B(x,1)}H_{\Phi}^Mg_j(z)dz
  \gtrsim \int_{B(x,1)}2^{-j}dz\sim 2^{-j}.
\end{split}
\ee
From this and the fact $|\Xi_j|\sim 2^{(1-\theta)j}$, we deduce that
\be
\begin{split}
  \|\va \ast H_{\Phi}^Mg_j\|^p_{L^p(\Xi_j)}
  \gtrsim &
  2^{-jp}|\Xi_j|\gtrsim 2^{-jp}2^{(1-\theta)j}  =2^{\frac{(1-p)j}{2}}.
\end{split}
\ee
On the other hand, observing that $E_{j,z}^0\subset A_j$ for $z\in B(x,2)$ with $x\in \Xi_j$,
\be
\begin{split}
  \|\va \ast H_{\Phi}^Eg_j\|^p_{L^p(\Xi_j)}
  \leq &
  \|\va \ast H_{\Phi}^Eg_j\|^p_{L^1(\Xi_j)}|\Xi_j|^{1-p}
  \\
  \leq &
  \left(\int_{\Xi_j}\int_{B(x,2)}\va(x-z)\int_{0}^{\fy}|\phi(r)-\phi(1)|g_j(z/r)drdzdx\right)^p|\Xi_j|^{1-p}
  \\
  = &
  \left(\int_{\Xi_j}\int_{B(x,2)}\va(x-z)\int_{E_{j,z}^0}|\phi(r)-\phi(1)|g_j(z/r)drdzdx\right)^p|\Xi_j|^{1-p}
  \\
  \leq &
  \left(\int_{A_j}|\phi(r)-\phi(1)|\int_{\rn}g_j(z/r)\int_{\rn}\va(x-z)dxdz dr\right)^p|\Xi_j|^{1-p}
  \\
  \lesssim &
  \left(\int_{A_j}|\phi(r)-\phi(1)|dr\right)^p|\Xi_j|^{1-p}\sim \ep_j^p|A_j|^p|\Xi_j|^{1-p}\sim \ep_j^p2^{\frac{(1-p)j}{2}},
\end{split}
\ee
where $\ep_j\rightarrow 0^+$ as $j\rightarrow \fy$.
Now, we have finished the estimates of main term $\|\va \ast H_{\Phi}^Mg_j\|^p_{L^p(\Xi_j)}$
and error term $\|\va \ast H_{\Phi}^Eg_j\|^p_{L^p(\Xi_j)}$, the remainder of this proof is the same
as that of Theorem \ref{Thm, lp}.
\end{proof}

Using Theorem \ref{Thm, local lp} and Lemma \ref{lemma, ebd}, we have following corollary.
\begin{corollary}
 Let $0<q\leq \fy, s\in \mathbb{R}$, $\Phi\in L_{loc}^{1}(\rn\bs\{0\})$. If $0<p<1$, we have
  \be
  H_{\Phi}\ \text{is bounded on}\ X \Longleftrightarrow H_{\Phi}=0,
  \ee
  where $X=F_s^{p,q}$, $B_s^{p,q}$ or $M^{p,q}_s$.
\end{corollary}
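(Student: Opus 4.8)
The plan is to deduce the Corollary directly from Theorem~\ref{Thm, local lp}, so that the only genuine work is to check that each of the spaces $X \in \{F_s^{p,q}, B_s^{p,q}, M^{p,q}_s\}$ meets the standing hypotheses of that theorem. Those hypotheses are: $X$ is a translation-invariant (quasi-)Banach space (the standing assumption opening Section~3), $C_c^\infty(\rn) \subset X$, and $X \hookrightarrow L^p_0(\rn)$. The ``if'' direction is trivial, so I would concentrate on supplying these three facts in the stated range $0 < p < 1$, $0 < q \leq \fy$, $s \in \bbR$, and then invoke Theorem~\ref{Thm, local lp} once for each $X$.

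First I would dispose of the embedding $X \hookrightarrow L^p_0(\rn)$, which is the analytic heart of the matter but has already been carried out: it is precisely statement~(2) of Lemma~\ref{lemma, ebd}, valid for all three choices of $X$ in the relevant parameter range. Second, for $C_c^\infty(\rn) \subset X$ I would use the inclusion $C_c^\infty(\rn) \subset \mathscr{S}(\rn)$ together with $\mathscr{S}(\rn) \subset X$; for $X = M^{p,q}_s$ this is recorded in the Definition, and for the Besov and Triebel--Lizorkin scales it is the standard fact that Schwartz functions belong to $B_s^{p,q}$ and $F_s^{p,q}$ for every admissible triple $(p,q,s)$.

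Third, I would verify translation invariance. Each of the three quasi-norms is assembled from the $L^p$-norms of the pieces $\Delta_j f$ (for $B_s^{p,q}, F_s^{p,q}$) or $\Box_k f$ (for $M^{p,q}_s$), and these localization operators are Fourier multipliers, $\Delta_j = \mathscr{F}^{-1}\phi_j\mathscr{F}$ and $\Box_k = \mathscr{F}^{-1}\sigma_k\mathscr{F}$. Since $\widehat{T_y f}(\xi) = e^{-2\pi i y\cdot\xi}\hat f(\xi)$, the multipliers commute with $T_y$, i.e. $\Delta_j(T_y f) = T_y(\Delta_j f)$ and $\Box_k(T_y f) = T_y(\Box_k f)$; as translation preserves every $L^p$-norm, it leaves each summand unchanged, whence $\|T_y f\|_X = \|f\|_X$. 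For $F_s^{p,q}$ one argues identically after moving $T_y$ outside the inner $\ell^q$-sum and the outer $L^p$-norm. With these three properties established, Theorem~\ref{Thm, local lp} applies verbatim and delivers the claimed equivalence for each such $X$.

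I expect essentially no obstacle beyond bookkeeping: all the real analytic difficulty was absorbed into Lemma~\ref{lemma, ebd} and Theorem~\ref{Thm, local lp}. The only points warranting a line of care are confirming that the defining decomposition operators are honest Fourier multipliers, so that translation invariance is literally inherited, and noting that the parameter constraints ($0<p<\fy$ for $F_s^{p,q}$, and $0<p,q\leq\fy$ for $B_s^{p,q}$ and $M^{p,q}_s$) are all compatible with $0<p<1$, which they are.
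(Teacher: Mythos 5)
Your proposal is correct and matches the paper's own (very brief) proof: the corollary is deduced exactly by combining Theorem~\ref{Thm, local lp} with statement~(2) of Lemma~\ref{lemma, ebd}, the remaining hypotheses (translation invariance via the multiplier structure of $\Delta_j$, $\Box_k$, and $C_c^\infty(\rn)\subset\mathscr{S}(\rn)\subset X$) being routine checks the paper leaves implicit. Your write-up simply makes these verifications explicit, which is fine.
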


In particular, due to the time-frequency symmetry of modulation space, we have following corollary.

\begin{corollary}
 Let $0<p,q\leq \fy$. Let $\Phi$ be a measurable function satisfying
   \be
  \int_{B(0,1)}|y|^n\Phi(y)dy<\infty,\\
  \ \ \ \ \
  \int_{B(0,1)^c}\Phi(y)dy<\infty.
\ee
  If $0<p<1$ or $0<q<1$, we have
  \be
  H_{\Phi}\ \text{is bounded on}\ M^{p,q}\Longleftrightarrow H_{\Phi}=0.
  \ee
\end{corollary}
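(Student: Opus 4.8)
The plan is to split the argument according to which exponent lies below $1$. If $0<p<1$, the statement is already contained in the corollary preceding this one, applied with $s=0$ and $X=M^{p,q}$, once we observe that the two hypotheses on $\Phi$ force $\Phi\in L^1_{loc}(\rn\bs\{0\})$: on an annulus $a\leq|y|\leq 1$ one has $|y|^{n}\geq a^{n}$, so $\int_{a\leq|y|\leq 1}|\Phi|\,dy\leq a^{-n}\int_{B(0,1)}|y|^{n}|\Phi|\,dy<\fy$, while $\int_{1\leq|y|\leq b}|\Phi|\,dy\leq\int_{B(0,1)^c}|\Phi|\,dy<\fy$. Hence the substantive content is the case $0<q<1$ (with $p\geq 1$), which I would treat by the time-frequency symmetry of $M^{p,q}$.

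First I would conjugate $H_{\Phi}$ by the Fourier transform. Writing $H_{\Phi}f(x)=\int_{0}^{\fy}\phi(r)f(x/r)\,dr$ with the radial profile $\phi$ as in Theorem \ref{Thm, lp}, the dilation identity $\mathscr{F}[f(\cdot/r)](\xi)=r^{n}\hat f(r\xi)$ and the substitution $s=1/r$ give
\be
\mathscr{F}H_{\Phi}\mathscr{F}^{-1}g(\xi)=\int_{0}^{\fy}\tilde\phi(s)g(\xi/s)\,ds,\qquad \tilde\phi(s):=\phi(1/s)s^{-n-2}.
\ee
Thus the Fourier conjugate $\mathscr{F}H_{\Phi}\mathscr{F}^{-1}=:H_{\tilde\Phi}$ is again an operator of Hausdorff type, now with radial profile $\tilde\phi$. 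Since the map $s\mapsto 1/s$ preserves local integrability on $(0,\fy)$, the relation above and $\phi\in L^1_{loc}(\mathbb{R}^{+})$ yield $\tilde\phi\in L^1_{loc}(\mathbb{R}^{+})$; in particular $H_{\tilde\Phi}$ is pointwise well defined on smooth functions supported away from the origin, exactly as required in the proof of Theorem \ref{Thm, local lp}.

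Next I would check that $X:=\mathscr{F}M^{p,q}$ satisfies all the hypotheses of Theorem \ref{Thm, local lp} with $q$ in the role of the exponent ``$p$'' there (recall $0<q<1$). The inclusion $C_c^{\fy}(\rn)\subset X$ follows from $\mathscr{S}=\mathscr{F}\mathscr{S}$ and $\mathscr{S}\subset M^{p,q}$. Translation invariance of $X$ is the Fourier image of the modulation invariance of $M^{p,q}$: from $\mathscr{F}^{-1}(T_{y}g)=M_{y}\mathscr{F}^{-1}g$, where $M_{y}h(x)=e^{2\pi i y\cdot x}h(x)$, together with the fact that $M^{p,q}$ is invariant under the modulations $M_{y}$ (isometrically under the integer shifts $y=2^{j}e_{0}$ that are actually used in the construction, or under all shifts if one passes to the equivalent short-time-Fourier-transform quasi-norm, boundedness being insensitive to the choice of equivalent quasi-norm), one obtains $\|T_{y}g\|_{X}=\|g\|_{X}$. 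Finally, the embedding $X\hookrightarrow L^{q}_{0}(\rn)$ is Lemma \ref{lemma, ebd}(2) read with the two indices interchanged: taking its exponent $\leq 1$ to be $q$ and its free exponent to be $p$, the space $\mathscr{F}M^{q,p}$ appearing there becomes exactly $\mathscr{F}M^{p,q}=X$ and embeds into $L^{q}_{0}$.

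With these verifications, Theorem \ref{Thm, local lp} applies to $H_{\tilde\Phi}$ on $X$ and yields: boundedness of $H_{\tilde\Phi}$ on $X$ forces $H_{\tilde\Phi}=0$, i.e. $\tilde\phi=0$ a.e., equivalently $\phi=0$ a.e., equivalently $H_{\Phi}=0$. Since $\mathscr{F}$ is a bijection of $\mathscr{S}'$ and an isometry between $M^{p,q}$ and $X=\mathscr{F}M^{p,q}$, boundedness of $H_{\Phi}$ on $M^{p,q}$ is equivalent to boundedness of $H_{\tilde\Phi}$ on $X$, which closes the argument. The step I expect to be the main obstacle is the identification in the displayed formula together with the verification that $X=\mathscr{F}M^{p,q}$ is translation invariant in the precise sense demanded by Theorem \ref{Thm, local lp}; these are the two places where the time-frequency duality of modulation spaces genuinely enters, while everything else is a transcription to the frequency side of the already-proved Theorem \ref{Thm, local lp} and Lemma \ref{lemma, ebd}.
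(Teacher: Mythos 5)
Your proposal is correct and follows essentially the same route as the paper: the case $0<p<1$ is reduced to the embedding $M^{p,q}\hookrightarrow L^p_0$, and the case $p\geq 1$, $0<q<1$ is handled by conjugating with the Fourier transform, identifying the conjugate as a Hausdorff-type operator with radial profile $\tilde\phi(s)=\phi(1/s)s^{-n-2}$ (which coincides with the paper's $\widetilde{\phi}(r)=\int_{\mathbb{S}^{n-1}}\Phi(y'/r)r^{-1-2n}d\sigma(y')$), and running the argument of Theorem \ref{Thm, local lp} on $\mathscr{F}M^{p,q}$ via the embedding $\mathscr{F}M^{p,q}\hookrightarrow L^q_0$ from Lemma \ref{lemma, ebd}. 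Your added verifications (that the hypotheses on $\Phi$ give $\Phi\in L^1_{loc}(\rn\bs\{0\})$, and that $\mathscr{F}M^{p,q}$ is isometrically invariant under the integer translations $2^je_0$ actually used) are details the paper leaves implicit, but the substance of the argument is identical.
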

\begin{proof}
The ``if'' part is trivial. We focus on the proof for the ``only if'' part.

If $0<p<1$, we have $M^{p,q}\hookrightarrow L^p_{0}(\rn)$, then the conclusion follows by Theorem \ref{Thm, local lp}.

If $p\geq 1$, $0<q<1$, we will use the Fourier transform to exchange the time and frequency space.
  It follows by Remark 1.4 in \cite{ZhaoFanGuo2018AFA} that
\begin{equation*}
  \widehat{H_{\Phi}f}=\widetilde{H_{\Phi}}\widehat{f},
\end{equation*}
where
\begin{equation*}
  \widetilde{H_{\Phi}}f(x)=\int_{\mathbb{R}^n}\Phi(y)|y|^nf(|y|x)dy.
\end{equation*}
Thus,
\begin{equation*}
    \|H_{\Phi}f\|_{M^{p,q}}=\|\widehat{H_{\Phi}f}\|_{\mathscr{F}M^{p,q}}
    =\|\widetilde{H_{\Phi}}\widehat{f}\|_{\mathscr{F}M^{p,q}}.
  \end{equation*}
If $H_{\Phi}$ is bounded on $M_{p,q}$, we have
  \begin{equation*}
    \|\widetilde{H_{\Phi}}f\|_{\mathscr{F}M^{p,q}}
    =
    \|H_{\Phi}\check{f}\|_{M^{p,q}}
    \lesssim
    \|\check{f}\|_{M^{p,q}}
    =
    \|f\|_{\mathscr{F}M^{p,q}}.
  \end{equation*}
Write
\be
\begin{split}
  \widetilde{H_{\Phi}}f(x)
  = &
  \int_{\mathbb{R}^n}\Phi(y)|y|^nf(|y|x)dy
  \\
  = &
    \int_{0}^{\fy}\int_{\mathbb{S}^{n-1}}\Phi(ry')r^{n}f(rx)d\sigma(y')r^{n-1}dr
  \\
  = &
  \int_{0}^{\fy}\int_{\mathbb{S}^{n-1}}\Phi(y'/r)r^{-1-2n}f(x/r)d\sigma(y')dr
  = : \int_{0}^{\fy}\widetilde{\phi}(r)f(x/r)dr,
\end{split}
\ee
where
\be
\widetilde{\phi}(r)=\int_{\mathbb{S}^{n-1}}\Phi(y'/r)r^{-1-2n}d\sigma(y').
\ee
Observe $\widetilde{\phi}\in L^1_{loc}(\mathbb{R}^+)$
and
recall $\mathscr{F}M^{p,q}\hookrightarrow L^q_{0}(\rn)$ with $q\in (0,1)$.
By the same argument as in the proofs of Theorem \ref{Thm, lp} and \ref{Thm, local lp}, we conclude that $\widetilde{\phi}=0$ and complete this proof.
\end{proof}

\subsection*{Acknowledgements}
This work was partially supported by the National Natural Foundation of China (Nos. 11701112, 11771388, 11771358, 11671414, 11601456),
Zhejiang Provincial Natural Science Foundation of China (No. LY18A010015)
and
Natural Science Foundation of Fujian Province (Nos. 2017J01723, 2018J01430).

\bibliographystyle{abbrv}

\end{document}